\numberwithin{equation}{section}
\newtheorem{theorem}{Theorem}[section]
\newtheorem{lemma}[theorem]{Lemma}
\theoremstyle{remark}
\newtheorem{remark}{Remark}[section]
\theoremstyle{definition}
\newcommand{\R}{\mathbb{R}}
\newcommand{\C}{\mathbb{C}}
\begin{document}

\title%
[Counterexamples to Strichartz estimates]%
{On the lack of dispersion for a class of magnetic Dirac flows}
\begin{abstract}
  We show that global Strichartz estimates for magnetic Dirac operators generally fails, if the potentials do not decay fast enough at infinity. In order to prove this, we construct some explicit examples of homogeneous magnetic potentials with less than Coulomb decay, i. e. with homogeneity-degree more than -1, such that the magnetic field points to a fixed direction, which does not depend on $x\in\R^3$.
\end{abstract}
\date{\today}    
\author{Naiara Arrizabalaga}
\address{Naiara Arrizabalaga: Universidad del Pa\'is Vasco, Departamento de
Matem\'aticas, Apartado 644, 48080, Bilbao, Spain}
\email{naiara.arrizabalaga@ehu.es}

\author{Luca Fanelli}
\address{Luca Fanelli:
Universidad del Pa\'is Vasco, Departamento de
Matem\'aticas, Apartado 644, 48080, Bilbao, Spain}
\email{luca.fanelli@ehu.es}

\author{Andoni Garc\'ia}
\address{Andoni Garc\'ia:
Universidad del Pa\'is Vasco, Departamento de
Matem\'aticas, Apartado 644, 48080, Bilbao, Spain}
\email{andoni.garcia@ehu.es}

\subjclass[2000]{
35L05, 
35Q40, 
58J50, 
}
\keywords{%
Dirac equation,
Strichartz estimates
dispersive equations,
magnetic potential }

\maketitle


\section{Introduction}\label{sec:introd}

Among the differential models in Quantum Mechanics, a relevant and interesting role is played by the Dirac system. The free Dirac operator (in the standard Pauli representation) is the $1^{\text{st}}$-order differential operator
\begin{equation*}
H_0:=-i\alpha\cdot\nabla+m\beta=
-i\sum_{k=1}^3\alpha_k\partial_k+m\beta,
\end{equation*}
where $\alpha_k,\beta\in\mathcal M_{4\times4}(\C)$, $k=1,2,3$, are the {\it Dirac matrices}
\begin{equation}\label{eq:matrici}
\alpha_k=
  \left(
  \begin{array}{cc}
    0 & \sigma_k
    \\
    \sigma_k & 0
  \end{array}\right),
  \qquad
   \beta
  =
  \left(
  \begin{array}{cc}
    I_2 & 0
    \\
    0 & -I_2
  \end{array}\right),
  \qquad
  k=1,2,3
\end{equation}
defined in terms of the {\it Pauli matrices} $\sigma_k\in\mathcal M_{2\times2}(\C)$, given by
\begin{equation*}
I_2
=
  \left(
  \begin{array}{cc}
    1 & 0
    \\
    0 & 1
  \end{array}\right),
  \quad
  \sigma_1
  =
  \left(
  \begin{array}{cc}
    0 & 1
    \\
    1 & 0
  \end{array}\right),
  \quad
  \sigma_2
  =
  \left(
  \begin{array}{cc}
    0 & -i
    \\
    i & 0
  \end{array}\right),
  \quad
  \sigma_3
  =
  \left(
  \begin{array}{cc}
    1 & 0
    \\
    0 & -1
  \end{array}\right);
\end{equation*}
the operator $H_0$ acts on spinor-valued functions $f:\R^3\to\C^4$. Since the matrices $\alpha_k,\beta$ are hermitian, $H_0$ is self-adjoint on the Hilbert space $L^2(\R^3;\C^4)$, with domain $H^1(\R^3;\C^4)$ (see e.g. \cite{T}).
The Dirac system reads as follows: 
\begin{equation}\label{eq:d}
  \partial_tu =iH_0u.
\end{equation}
Here $u=u(t,x):\R^{1+3}\to\C^4$, and we neglected the physical constants. Notice that the system \eqref{eq:d} is (weakly) hyperbolic, since $\alpha_k,\beta$ have eigenvalues $\pm1$, both with multiplicity 2; consequently, equation \eqref{eq:d} has finite speed of propagation, according to the causality principle of General Relativity.

The motivation which led Dirac to introduce \eqref{eq:d} in the well known paper \cite{D} was to describe the evolution of the free electron in the 3D-space, taking into account the spin of the particle. Since he needed to factorize the Laplace operator in a suitable differential square root, he was forced to look for (hermitian) matrix-coefficients satisfying the well known anti-commutation rules
\begin{equation*}
\alpha_l\alpha_k+\alpha_k\alpha_l=2\delta_{kl}I_4;
\end{equation*}
the above defined set $\{\alpha_k,\beta\}$ in \eqref{eq:matrici} is one of the possible examples of  linearly independent matrix-sets satisfying the previous properties. One of the consequences of the anti-commutation rules is that the square of $H_0$ is a diagonal matrix of Laplace operators, namely
\begin{equation*}
  H_0^2=(-\Delta+m^2)I_4,
\end{equation*}
where $I_4$ is the $4\times4$-identity matrix. Therefore, by conjugating
$i\partial_t-H_0$, one immediately obtains the Klein-Gordon operator, i.e.
\begin{equation}\label{eq:KG}
  (i\partial_t+H_0)(i\partial_t-H_0)=\left(-\partial_t^2+\Delta-m^2\right)I_4.
\end{equation}
This shows that \eqref{eq:d} can be listed within the class of {\it dispersive equations}.

In the last few years, thanks to the research on nonlinear models (as nonlinear Schr\"odinger and wave, Korteweg-de Vries etc...), it has been understood that the dispersion (when it is present) plays a fundamental role in the dynamics. A great effort has been indeed devoted to study the tools which permit to quantify dispersive phaenomena in terms of a priori estimates for the free or perturbed flows. 

The interest in looking to the Dirac equation as a dispersive model is in fact a recent matter of research. Notice that, due to 
\eqref{eq:KG}, if one considers sufficiently regular initial data, the unitary flow $e^{itH_0}$ (which uniquely defines the solution to \eqref{eq:d}, when it acts on functions in the domain of $H_0$) satisfies the same dispersive estimates as the ones for the 3D Klein-Gordon equation. Among them, a particular attention has been devoted to Strichartz estimates (see the standard references \cite{GV}, \cite{KT}, \cite{S}), which in this case are
\begin{equation}\label{eq:strimass}
\|e^{itH_0}f\|_{L^p_tL^q_x}\leq C\|f\|_{H^{\frac{1}{p}-\frac{1}{q}+\frac{1}{2}}},
\end{equation}  
for any couple $(p,q)$ satisfying the Schr\"odinger admissibility condition 
\begin{equation}\label{eq:admismass}
\frac{2}{p}+\frac{3}{q}=\frac{3}{2},\qquad 2\leq p\leq\infty,\qquad 2\leq q\leq6.
\end{equation}
Here we used the standard notations
\begin{equation*}
  \|f\|_{H^{s}}:=\|\langle D\rangle^s f\|_{L^2},
  \qquad
  \|f\|_{\dot H^s}:=\| |D|^sf\|_{L^2},
  \qquad
  \langle D\rangle:=\left(1+|D|^2\right)^{\frac12},
\end{equation*}
where $|D|=\mathcal F^{-1}(|\xi|\mathcal F)$ and $\mathcal F$ is the usual Fourier transform.
Moreover we denoted by
\begin{equation*}
  \|f\|_{L^p_tL^q_x}:=\left(\int_\R\left(\int_{\R^3}|f|^q\,dx\right)^{\frac pq}\,dt\right)^{\frac1p}.
\end{equation*}
For the {\it massless} Dirac equation (i.e. $m=0$, the {\it neutrino}-model), the analogous estimates are
\begin{equation}\label{eq:stri}
\|e^{it\mathcal{D}}f\|_{L^p_tL^q_x}\leq C\|f\|_{\dot{H}^{\frac{1}{p}-\frac{1}{q}+\frac{1}{2}}},
\end{equation}
for any couple $(p,q)$ satisfying the wave-admissibility condition
\begin{equation}\label{eq:admis}
\frac{2}{p}+\frac{2}{q}=1,\qquad 2<p\leq\infty, \qquad 2\leq q<\infty.
\end{equation}
Here we denoted by
\begin{equation*}
  \mathcal D:= -i\alpha\cdot\nabla = -i\sum_{k=1}^3\alpha_k\partial_k,
\end{equation*}
the massless Dirac operator.
Inequalities \eqref{eq:strimass} and \eqref{eq:stri} follow by the Strichartz estimates for the Klein-Gordon and wave equation, respectively (see e.g. \cite{DF} for details). We remark that the endpoint estimate $p=2$ is not included in \eqref{eq:stri}, in analogy with the 3D-wave equation for which it fails, as proved in \cite{KM} (see also \cite{M} for the Schr\"odinger case). As \eqref{eq:strimass} and \eqref{eq:stri} show, a natural loss of derivatives with respect to the initial data is needed (except to the case $(p,q)=(\infty,2)$, which obviously follows by the unitarity of the groups $e^{itH_0}, e^{it\mathcal D}$); indeed, due to the finite speed of propagation, no smoothing effect (in the sense of the $L^q_x$-regularity) can occur. 

A relevant question is whether the dispersion, and in particular Strichartz estimates, is preserved or not under rough linear perturbations of the free operator. The interaction of a free particle with an external field is usually modeled, in the case of Dirac operators, by perturbating the principal part with a 0-order term.
More precisely, the perturbed (massive) Dirac operator takes the following form
\begin{equation*}
  H=H_0+V,
\end{equation*}
where we assume $V=V(t,x):\R^{1+3}\to\mathcal M_{4\times 4}(\C)$ to be a hermitian matrix $V^t=\overline{V}$, in order to preserve the symmetry.
If $V=V(x)$ and $H$ is self-adjoint, then the unitary group $e^{itH}$ can be standardly defined via Spectral Theorem. In the last few years, some efforts have been spent in order to understand the dispersive properties of perturbed Dirac propagators. In \cite{DF2}, the authors prove some time-decay estimates for the massless Dirac equation, if the potential decay sufficiently fast at infinity, and is possibly singular at some point. Nevertheless, these estimates are far from being optimal, so that they cannot be used in order to prove Strichartz estimates using the standard techniques by Ginibre-Velo \cite{GV}, and Keel-Tao \cite{KT}. On the other hand, some perturbative techniques have been introduced in order to obtain Strichartz estimates from some {\it weakly dispersive estimates}, involving local energy decay and Morawetz estimates, which can be proved by direct methods. Inspired by the ideas in \cite{RV} first, and \cite{ST} \cite{BPST} \cite{BPST2} later, in which a suitable mix of free Strichartz and local smoothing is used in a $TT^*$-argument for the Schr\"odinger equation with an electric potential or more in general with variable coefficients, in \cite{DF} the authors prove the full range of Strichartz estimates for both the propagators $e^{itH}, e^{it(\mathcal D+V)}$, in the case of short-range potentials.

One of the main difficulties in handling Dirac operators with potential is that the square is not generally diagonal, differently from the free case $V\equiv0$. Moreover, a $1^{\text{st}}$-order term naturally appears in the expansions of $H^2$ and $\mathcal D^2$. Therefore, it is natural to look for potentials $V$ with some specific structure, in order to get some precise informations on the dynamics by direct techniques.
Among the possible models of matrix-vector fields $V$, the so called {\it magnetic potentials} possess some relevant features.
A magnetostatic potential is a vector field
\begin{equation*}
  A=A(x)=(A^1(x),A^2(x),A^3(x)):\R^{3}\to\R^{3}.
\end{equation*}
The magnetic Dirac operator take the following form:
\begin{equation}\label{eq:dirmag}
  H=-i\alpha\cdot(\nabla-iA)+m\beta=-i\sum_{k=1}^3\alpha_k\left(\partial_k-iA^k\right)+m\beta,
\end{equation}
where the connexion changes, passing from straight derivatives to the covariant ones. We will often use the standard notation
\begin{equation*}
  \nabla_A:=\nabla-iA.
\end{equation*}
In the massless case, we will usually denote by
\begin{equation}\label{eq:dirmag2}
\mathcal D_A:=-i\alpha\cdot\nabla_A=-i\sum_{k=1}^3\alpha_k(\partial_k-iA^k).
\end{equation}
We also introduce the {\it magnetic field}, which is given by
\begin{equation*}
  B(x)=\text{curl}A(x).
\end{equation*}
One interesting property of these operators are the following identities
\begin{equation*}
   H^2=(-\Delta_A+m)I_4-2S\cdot B,
   \qquad
   \mathcal D_A^2=-\Delta_AI_4-2S\cdot B;
\end{equation*}
here we denoted by $\Delta_A=(\nabla-iA)^2$, while $S$ is the spin-operator, given by
\begin{equation*}
  S=\frac i4\alpha\wedge\alpha=\frac i4(\alpha_2\alpha_3-\alpha_3\alpha_2,
  \alpha_3\alpha_1-\alpha_1\alpha_3, \alpha_1\alpha_2-\alpha_2\alpha_1).
\end{equation*}
All the previous identities are formal (see \cite{BDF} for details), and can be justified on 
$\mathcal C^{\infty}_0(\R^3;\C^4)$, then extended by density to the domains of the operators $H^2,\mathcal D_A^2$. This shows that, apart from the 0-order term involving the spin, the principal part of $H^2$ and $\mathcal D_A^2$ is a diagonal matrix of magnetic Laplace operators. As a consequence, the structure of the magnetic Dirac equations
\begin{equation*}
  \partial_tu=iHu,
  \qquad
  \partial_tu=i\mathcal D_Au
\end{equation*}
is strictly related to the magnetic Klein-Gordon and wave equations.
This fact was exploited in \cite{BDF}, in which the authors can prove weak dispersive estimates by direct techniques, involving multiplier methods; then they can use it to show that Strichartz estimates hold, in the full admissibility range.
This follows a program which is common to several results, produced in the last few years by many different authors, about the dispersive properties of magnetic Schr\"odinger and wave equations (see \cite{DFVV}, \cite{EGS1}, \cite{EGS}, \cite{FV}, \cite{GST}, \cite{G}, \cite{Ste}).

The authors of the above mentioned papers focused their attention to potentials which are rough from the point of view of the Sobolev regularity, but need to decay sufficiently at infinity and are possibly singular at points. In all these cases, the Coulomb-type potentials (namely $|A(x)|\sim1/|x|$) appear as a natural threshold for the validity of global (in time) Strichartz estimates. By the way, the only heuristic argument to corroborate this claim is given by the scaling invariance of the massless Dirac equation. Notice that the equation $\partial_tu=i\mathcal Du$ is invariant under the scaling
\begin{equation*}
  u_\lambda(t,x)=u\left(\frac t\lambda,\frac x\lambda\right);
  \qquad
  \lambda>0.
\end{equation*}
if we impose this property to be true for the magnetic Dirac equation $\partial_tu=i\mathcal D_Au$, we are forced to consider potentials which are homogeneous of degree $-1$, as in the Coulomb case. In view of these considerations, it is a natural problem to search for potentials with less than Coulomb decay at infinity, for which Strichartz estimates fail.
A first result in this direction has been proved by Goldberg, Vega and Visciglia in \cite{GVV}. In that case, the authors consider the electric Schr\"odinger equation $\partial_tu=i(\Delta+V(x))u$, with a repulsive potential $V$ with a non-degenerate critical point, of the form
\begin{equation*}
  V(x)=|x|^{-\delta}\omega\left(\frac{x}{|x|}\right),
\end{equation*}
in any dimension $n\geq1$: they prove that in the range $0<\delta<2$ the complete set of Strichartz estimates fail. The main idea is to show that, if $0<\delta<2$, most of the mass of the solution is localized around a non-dispersive function, namely a standing wave generated by an eigenfunction of a suitable harmonic oscillator. The approximation is performed by Taylor expanding the coefficients of the equation around the degenerate direction of the potential 
$V$. 
Later on, Duyckaerts \cite{D} showed some counterexamples to global Strichartz estimates for the same equation, by constructing some compactly supported potentials with bad singularities at points.
For the magnetic Schr\"odinger equation $\partial_tu=i\Delta_Au$, in \cite{FG} the authors constructed some counterexamples to Strichartz estimates, in any dimension $n\geq3$, based on potentials of the form
\begin{equation}\label{eq:potenziali}
  A(x)=|x|^{-\delta}M_nx;
\end{equation}
here $M_n\in\mathcal M_{n\times n}(\R)$ is the anti-symmetric matrix 
\begin{equation}\label{eq:matrix}
  M_{2k+1}=
  \left(
  \begin{array}{ccccc}
    \Omega_2 & 0 & \cdots & 0 & 0
    \\
    0 & \Omega_2 & \cdots & 0 & 0
    \\
    \vdots & \vdots & \ddots & \vdots & \vdots 
    \\
    0 & 0 & \cdots & \Omega_2 & 0
    \\
    0 & 0 & \cdots & 0 & 0 
  \end{array}
  \right),
  \quad
  M_{2k}=
  \left(
  \begin{array}{cccccc}
    \Omega_2 & 0 & \cdots & 0 & 0 & 0
    \\
    0 & \Omega_2 & \cdots & 0 & 0 & 0
    \\
    \vdots & \vdots & \ddots & \vdots & \vdots & \vdots 
    \\
    0 & 0 & \cdots & \Omega_2 & 0 & 0
    \\
    0 & 0 & \cdots & 0 & 0 & 0
    \\
    0 & 0 & \cdots & 0 & 0 & 0
  \end{array}
  \right),
\end{equation}
in odd and even dimension respectively, with $k\geq2$, and 
\begin{equation*}
\Omega_2=
\left(
\begin{array}{cc}
0 & 1 
\\ 
-1 & 0
\end{array}
\right).
\end{equation*}
In this case, it is crucial that the potential $A$ has at least one degenerate direction, as suggested by \eqref{eq:matrix}. As a consequence, the 2D-magnetic case cannot be treated in \cite{FG}, and at our knowledge it remains an open question whether it is possible or not to construct explicit counterexamples to Strichartz estimates. 
The idea is in fact inspired to the one by Goldberg, Vega and Visciglia in \cite{GVV}, but the examples in \cite{FG} are somehow more natural. Indeed, the expansion of $\Delta_A$ gives
\begin{equation*}
  \Delta_A=(\nabla-iA)^2=\Delta-2iA\cdot\nabla-|A|^2,
\end{equation*}
since $\text{div}A\equiv0$. Here the role of $V$ is played by $|A|^2$, and the non-degenerate critical point is given by $P=(0,0,1)$, which is in fact suggested by the fix direction of the magnetic field $B$.

The aim of this paper is to prove that the analogous examples also contradict the Strichartz estimates \eqref{eq:stri} for the (massless) Dirac equation. Our main result is the following.
\begin{theorem}\label{thm:counterdirac}
Let us consider the following anti-symmetric $3\times3$ matrix 
\begin{equation}\label{eq:M}
 M
  :=
  \left(
  \begin{array}{ccc}
    0 & 1 & 0
    \\
     -1 & 0 & 0
     \\ 0 & 0 & 0
  \end{array}\right),
\end{equation}
and A be the following vector field:
 \begin{equation}\label{eq:A}
    A(x)=|x|^{-\delta}Mx,
    \qquad
    1<\delta<2.
  \end{equation}
  Then the solution of the magnetic Dirac equation 
  \begin{equation}\label{eq:Diracmagnintr}
  \begin{cases}
    i\partial_tu(t,x)+\mathcal{D}_Au(t,x)=0
    \\
    u(0,x)=f(x),
  \end{cases}
\end{equation}
   with initial datum 
  $f\in \dot H^{\frac1p-\frac1q+\frac12}$, does not satisfy the Strichartz estimates \eqref{eq:stri}, for any admissible couple $(p,q)\neq(\infty,2)$ as in \eqref{eq:admis}.
\end{theorem}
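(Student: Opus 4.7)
The plan is to adapt the counterexample of Fanelli--Garc\'ia \cite{FG} from the magnetic Schr\"odinger setting to the magnetic Dirac case, exploiting the identity $\mathcal{D}_A^2 = -\Delta_A I_4 - 2S\cdot B$: every solution $u$ of \eqref{eq:Diracmagnintr} also satisfies the magnetic wave-type equation $\partial_t^2 u - \Delta_A u - 2S\cdot B\,u = 0$. The strategy is, for each $R\gg 1$, to build an initial datum $f_R$ which is an approximate eigenfunction of $\mathcal{D}_A^2$, concentrated near the point $(0,0,R)\in\R^3$. Its Dirac evolution will then remain trapped along the $x_3$-axis, failing to disperse and thereby violating \eqref{eq:stri}.

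The effective transverse trapping potential $|A(x)|^2 = |x|^{-2\delta}(x_1^2+x_2^2)$ vanishes exactly on the $x_3$-axis. In the centred variables $y_\perp = (x_1,x_2)$, $y_3 = x_3 - R$ a Taylor expansion gives
\[
 |A(x)|^2 = R^{-2\delta}|y_\perp|^2 + O\!\bigl(R^{-2\delta-1}(|y_3| + |y_\perp|^2/R)\,|y_\perp|^2\bigr),
\]
so the frozen transverse operator $-\Delta_\perp + R^{-2\delta}|y_\perp|^2$ is a $2$D harmonic oscillator of frequency $R^{-\delta}$, with $L^2(\R^2)$-normalised ground state $\phi_R$ (a radial Gaussian of width $R^{\delta/2}$, eigenvalue $2R^{-\delta}$). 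Radiality annihilates the cross term $2iA\cdot\nabla$ on $\phi_R$. Fixing exponents with $\delta/2 \le \gamma < \beta < (\delta+2)/4$ (a window which is non-empty exactly because $\delta<2$), set $\xi_0 = R^{-\gamma}$, $T_R = R^\beta$, pick a cutoff $\chi\in C^\infty_c(\R)$ supported in $[-T_R, T_R]$, and a constant spinor $v_0\in\C^4$, and define
\[
 f_R(x) := \phi_R(x_1, x_2)\,\chi(x_3-R)\,e^{i\xi_0(x_3-R)}\,v_0.
\]
A direct computation yields $\mathcal{D}_A^2 f_R = E_R^2 f_R + r_R$ with $E_R^2 = \xi_0^2 + 2R^{-\delta}$ and an $L^2$-remainder $r_R$ collecting the cubic corrections to $|A|^2$, the $y_3$-dependence of $|x|^{-\delta}$, the commutator with $\chi$, and the spin coupling $2S\cdot B$ (which is of size $O(R^{-\delta})$ on $\mathrm{supp}\,f_R$).

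Splitting $f_R = f_R^+ + f_R^-$ along the spectral subspaces of $\mathcal{D}_A$ corresponding to the two eigenvalue branches $\pm E_R$ of $\sqrt{\mathcal{D}_A^2}$, Duhamel's formula gives
\[
 e^{it\mathcal{D}_A}f_R \;=\; e^{itE_R}f_R^+ + e^{-itE_R}f_R^- + \mathrm{Err}(t),
\]
with $\|\mathrm{Err}(t)\|_{L^2}\lesssim t\|r_R\|_{L^2}$. The scale balance chosen for $(\gamma,\beta)$ makes this error $o(\|f_R\|_{L^2})$ on $[0,L_R]$ with $L_R = R^{(\delta+2)/4}$, and throughout this window $|e^{it\mathcal{D}_A}f_R|\asymp|f_R|$. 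Elementary Gaussian bookkeeping then yields
\[
 \|e^{it\mathcal{D}_A}f_R\|_{L^p_tL^q_x([0,L_R]\times\R^3)}\gtrsim L_R^{1/p}\,R^{-\delta/p}\,T_R^{1/q},\qquad
 \|f_R\|_{\dot H^{s}}\sim R^{-\delta s/2}\,T_R^{1/2},
\]
with $s = 1/p - 1/q + 1/2$ and dominant frequency $|\xi|\sim R^{-\delta/2}$ since $\gamma\ge\delta/2$. Using the wave-admissibility identities $p\cdot s = 2$ and $p(1/2 - 1/q) = 1$, the ratio telescopes to $(L_R/T_R)^{1/p} = R^{((\delta+2)/4 - \beta)/p}$, which diverges as $R\to\infty$, refuting \eqref{eq:stri} for every admissible $(p,q)\neq(\infty,2)$.

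The principal obstacle is the rigorous control of the remainder $r_R$ and the Duhamel error. This requires: (i) careful bookkeeping of the Taylor remainders of $A$ around the $x_3$-axis, exploiting that the leading linear-in-$y_3$ correction to $|A|^2$ is odd in $y_3$ and contributes only through higher-order averaging; (ii) control of the boundary commutators generated by the cutoff $\chi$, which are localised near $|y_3|\sim T_R$; and (iii) handling the spin-field coupling $2S\cdot B$, which does not commute with the spectral projections of $\mathcal{D}_A$ but is small enough on $\mathrm{supp}\,f_R$ to be absorbed into $r_R$. The range $1<\delta<2$ is what makes the scaling window $\delta/2\le\gamma<\beta<(\delta+2)/4$ nontrivial and the parabolic-type balance between trapping strength and transverse width feasible, paralleling the analysis of the Schr\"odinger case in \cite{FG}.
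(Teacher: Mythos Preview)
Your approach differs substantially from the paper's, and as written it has a genuine gap.

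\textbf{What the paper does.} The paper never squares $\mathcal D_A$. It works directly with the first-order operator: it fixes an eigenpair $(\lambda,v)$ of the two-dimensional magnetic Dirac operator $T=-i\alpha\cdot\nabla_y-\alpha\cdot(y_2,-y_1,0)$ with constant field, rescales to $\omega(y,z)=v(y/z^{\delta/2})$, and builds the explicit wave $W(t,y,z)=e^{i\lambda t/z^{\delta/2}}\omega(y,z)$, truncated by cutoffs to $W_R$. The key point is that $W_R$ is the \emph{exact} solution on $(0,R^\beta)$ of an inhomogeneous problem $i\partial_t u+\mathcal D_A u=\tilde F_R$, $u(0)=f_R$, with $\tilde F_R$ explicitly computable. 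One then estimates $\|W_R\|_{L^p_tL^q_x}$, $\|f_R\|_{\dot H^\sigma}$ and $\|\tilde F_R\|_{L^{p'}\dot H^{2\sigma}_{q'}}$ directly (Lemma~\ref{lem:lemapre}), shows the ratio blows up, disproving the \emph{inhomogeneous} Strichartz estimate, and deduces the homogeneous failure via $TT^\star$ and Christ--Kiselev. No Duhamel approximation and no $L^2$-to-$L^q$ passage are needed, because $W_R$ is explicit.

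\textbf{The gap in your argument.} Your route through $\mathcal D_A^2$ and a scalar Gaussian ground state runs into three problems, the last of which is fatal as stated.
\begin{itemize}
\item[(i)] On $\mathrm{supp}\,f_R$ one has $B\approx(0,0,-2R^{-\delta})$, so $2S\cdot B$ contributes at order $R^{-\delta}$, the \emph{same} order as $E_R^2\sim R^{-\delta}$. It cannot be ``absorbed into $r_R$'' as a small remainder; at best you must choose $v_0$ an eigenvector of $S_3$ and shift $E_R^2$ accordingly, which you do not do.
\item[(ii)] With $f_R^\pm=\tfrac12(f_R\pm E_R^{-1}\mathcal D_A f_R)$ one finds $\mathcal D_A f_R^\pm=\pm E_R f_R^\pm\pm(2E_R)^{-1}r_R$, so Duhamel gives $\|\mathrm{Err}(t)\|_{L^2}\lesssim t\,E_R^{-1}\|r_R\|_{L^2}$, not $t\|r_R\|_{L^2}$. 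The missing factor $E_R^{-1}\sim R^{\delta/2}$ tightens all your constraints on $(\gamma,\beta)$ beyond the window you state.
\item[(iii)] Even granting (i)--(ii), the step ``$|e^{it\mathcal D_A}f_R|\asymp|f_R|$'' is unjustified. Your Duhamel estimate controls $\mathrm{Err}(t)$ only in $L^2$; for the Strichartz lower bound you need $\|\mathrm{Err}(t)\|_{L^q}$ small relative to $\|f_R\|_{L^q}$ with $q>2$. By finite speed of propagation the error is supported in a ball of radius $\sim L_R$, which for $1<\delta<2$ is much larger than the transverse scale $R^{\delta/2}$ and the longitudinal scale $T_R$; an $L^2$-small function on such a large set can have $L^q$ norm exceeding $\|f_R\|_{L^q}$. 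There is no general mechanism to upgrade $L^2$ Duhamel control to $L^q$ here.
\end{itemize}
The paper's device of making $W_R$ an exact solution of an inhomogeneous problem is precisely what circumvents (iii): the $L^q$ norm of the solution is computed directly from the explicit ansatz, and the ``error'' appears only as a forcing whose $L^{p'}\dot H^{2\sigma}_{q'}$ norm is estimated, never as a correction to the solution itself.
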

\begin{remark}\label{rem:1}
  For potentials as in \eqref{eq:A}, the operator $\mathcal D_A$ is self-adjoint on $L^2$ (see the standard reference \cite{T}). Consequently, the unique solution of 
  \eqref{eq:Diracmagnintr} is defined as $u(t,\cdot)=e^{it\mathcal D_A}f(\cdot)$, by the Spectral Theorem.
\end{remark}
\begin{remark}\label{rem:2}
  Notice that, in the range $\delta\leq1$, the potentials given by \eqref{eq:A} do not decay at infinity. As a consequence, the spectrum of $\mathcal D_A$ is purely discrete, so that any standing wave $u(t,x)=e^{it\lambda}Q(x)$, where $\lambda$ is an eigenvalue of $\mathcal D_A$ and 
  $Q$ a corresponding eigenfunction is a solution of \eqref{eq:Diracmagnintr} which cannot verify any global Strichartz estimate. Moreover, in the range $\delta>2$ Strichartz estimates hold (see Theorem 1.6 in \cite{BDF}. The question about the critical behavior $\delta=2$ still remains open.
\end{remark}
\begin{remark}\label{rem:3}
  It is possible to prove that the above potentials are also counterexamples to the Strichartz estimates 
  \eqref{eq:strimass} for the massive Dirac equation. Indeed, one should repeat the same proof of Theorem \ref{thm:counterdirac} and take into account the lower order terms which appear once computing the non-homogeneous Sobolev norms.
\end{remark}
\begin{remark}[Magnetic waves]\label{rem:4}
  Strictly related to the Dirac equation, one could consider the Cauchy problem for the magnetic wave equation
  \begin{equation}\label{eq:wave}
  \begin{cases}
    \partial_t^2u-\Delta_Au=0
    \\
    u(0)=u_0
    \\
    \partial_tu(0)=u_1.
  \end{cases}
  \end{equation}
  Here the solution $u=u(t,x):\R^{1+n}\to\C$ can be expressed as 
  \begin{equation*}
    u(t,\cdot)=
    \cos\left(t\sqrt{-\Delta_A}\right)u_0(\cdot)+
    \frac{\sin\left(t\sqrt{-\Delta_A}\right)}{\sqrt{-\Delta_A}}u_1(\cdot),
  \end{equation*}
  where $-\Delta_A$ is assumed to be self-adjoint and positive and $\sqrt{-\Delta_A}$ is defined via Spectral Theorem. Therefore, in order to estimate the solution of \eqref{eq:wave} it suffices to prove that the magnetic wave propagator $e^{it\sqrt{-\Delta_A}}$ is bounded between suitable Banach spaces. In the free case $A\equiv0$, the Strichartz estimates are
  \begin{equation}\label{eq:striwave}
\|e^{it\sqrt{-\Delta}}f\|_{L^p_tL^q_x}\leq C\|f\|_{\dot{H}^{\frac{1}{p}-\frac{1}{q}+\frac{1}{2}}},
\end{equation}
for any couple $(p,q)$ satisfying the wave-admissibility condition
\begin{equation}\label{eq:admiswave}
  \frac{n-1}{p}+\frac{n-1}{q}=\frac{n-1}2,
  \qquad 
  2\leq p\leq\infty, 
  \qquad 
  (p,q)\neq(2,\infty),
\end{equation}
in analogy with \eqref{eq:stri}. We can prove the following result.
\begin{theorem}\label{thm:counterwave}
Let $n\geq3$ and $A$ be the vector field
 \begin{equation}\label{eq:AA}
    A(x)=|x|^{-\delta}Mx,
    \qquad
    1<\delta<2,
  \end{equation}
  where $M$ is given by \eqref{eq:matrix}.
  Then the solution of \eqref{eq:wave}, with initial datum 
  $f\in \dot H^{\frac1p-\frac1q+\frac12}$, does not satisfy the Strichartz estimates 
  \eqref{eq:striwave}, for any admissible couple $(p,q)\neq(\infty,2)$ as in \eqref{eq:admiswave}.
\end{theorem}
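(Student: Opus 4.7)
The plan is to transfer the approach of \cite{FG} for the magnetic Schrödinger propagator $e^{it\Delta_A}$ to the half-wave propagator $e^{it\sqrt{-\Delta_A}}$ governing \eqref{eq:wave}. The driving observation is: if $(Q_\lambda,\lambda^2)$ is an approximate eigenpair of $-\Delta_A$ with $\|Q_\lambda\|_{L^2}\sim 1$, $\lambda\to\infty$, and $\|(-\Delta_A-\lambda^2)Q_\lambda\|_{L^2}=o(\lambda^2)$, then functional calculus combined with a spectral cut-off of $Q_\lambda$ around $\lambda^2$ yields
\begin{equation*}
e^{it\sqrt{-\Delta_A}}Q_\lambda \approx e^{it\lambda}Q_\lambda
\end{equation*}
on time intervals $[0,T_\lambda]$ with $T_\lambda\to\infty$. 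The right-hand side is a standing wave with constant-in-time spatial profile, so $\|e^{it\lambda}Q_\lambda\|_{L^p_tL^q_x([0,T_\lambda]\times\R^n)}\sim T_\lambda^{1/p}\|Q_\lambda\|_{L^q}$, and comparing with $\|Q_\lambda\|_{\dot H^s}$ as $\lambda\to\infty$ will contradict \eqref{eq:striwave}.

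The quasi-modes are built exactly as in \cite{FG}. Since $M$ is antisymmetric, $\mathrm{div}\,A=0$ and $-\Delta_A=-\Delta+2iA\cdot\nabla+|A|^2$; the zero-order potential $|A(x)|^2=|x|^{2(1-\delta)}|M(x/|x|)|^2$ has a non-degenerate minimum on $S^{n-1}$ at the pole $P=e_n$ (the direction left invariant by $M_n$), and the drift $A\cdot\nabla$ is an angular-momentum operator. Taylor-expanding $-\Delta_A$ around $P$ and applying a parabolic rescaling $x=P+\lambda^{-\alpha}y$, with an exponent $\alpha=\alpha(\delta)>0$ balancing $-\Delta$, $A\cdot\nabla$ and $|A|^2$ against $\lambda^2$, reduces the operator to a harmonic oscillator $\mathcal H$ in the transverse variables $y'$. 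Taking
\begin{equation*}
Q_\lambda(x)=\chi(x)\,\lambda^{n\alpha/2}\,\phi\bigl(\lambda^\alpha(x-P)\bigr)\,e^{i\lambda\psi(x)},
\end{equation*}
with $\phi$ a low-lying eigenfunction of $\mathcal H$, $\psi$ a phase absorbing the drift, and $\chi$ a cut-off localising near $P$, gives $L^2$-normalised quasi-modes whose remaining $L^q$ and $\dot H^s$ norms are computed by elementary rescaling.

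Inserting these explicit norms into \eqref{eq:striwave} and invoking the admissibility relation \eqref{eq:admiswave}, one finds that the ratio
\begin{equation*}
\frac{\|e^{it\sqrt{-\Delta_A}}Q_\lambda\|_{L^p_tL^q_x([0,T_\lambda]\times\R^n)}}{\|Q_\lambda\|_{\dot H^s}}
\end{equation*}
diverges as $\lambda\to\infty$ for every admissible couple $(p,q)\neq(\infty,2)$; the excluded pair is of course saturated by unitarity of $e^{it\sqrt{-\Delta_A}}$. The hypothesis $1<\delta<2$ enters precisely in ensuring that the scaling exponents tilt in the right direction, exactly as in the Schrödinger case of \cite{FG}.

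The main technical obstacle is the passage through the square root. The quasi-mode construction naturally controls $(-\Delta_A-\lambda^2)Q_\lambda$ in $L^2$, but the half-wave propagator requires controlling $(\sqrt{-\Delta_A}-\lambda)Q_\lambda$. The elementary bound $\|(\sqrt{-\Delta_A}-\lambda)Q_\lambda\|_{L^2}\lesssim \lambda^{-1}\|(-\Delta_A-\lambda^2)Q_\lambda\|_{L^2}$ holds only when the spectral support of $Q_\lambda$ stays bounded away from $0$; this must be secured either by sharpening the quasi-mode to push the residual to higher orders in $\lambda^{-1}$, in the spirit of \cite{FG}, or by projecting $Q_\lambda$ onto a dyadic spectral window around $\lambda^2$ and absorbing the left-over piece through the already-available local (in time) estimates. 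Once this spectral localisation is in place, what remains is a routine bookkeeping of rescaling exponents entirely parallel to the proof of Theorem \ref{thm:counterdirac}.
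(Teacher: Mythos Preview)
Your route and the paper's diverge at the very first step. The paper does not use quasi-modes of $-\Delta_A$ together with functional calculus for $\sqrt{-\Delta_A}$ at all. Instead, following exactly the strategy of Theorem~\ref{thm:counterdirac}, it builds an explicit approximate solution of the second-order PDE: take an eigenfunction $v$ of the two-dimensional constant-field magnetic Laplacian (the Landau Hamiltonian), rescale to $\omega(y,z)=v(y/z^{\delta/2})$, and set $W(t,y,z)=e^{i\lambda t/z^{\delta/2}}\omega(y,z)$. Then $\partial_t^2W-\Delta_A W$ equals an explicit, small forcing $F$; after the same slab cut-off $\psi_R\psi\chi$ as in Section~\ref{sec:reduction} one gets an inhomogeneous Cauchy problem whose data $f_R$, solution $W_R$, and forcing $F_R$ satisfy the estimates of Lemma~\ref{lem:lemapre} verbatim, and the inhomogeneous-to-homogeneous reduction via $TT^*$ and Christ--Kiselev finishes the argument. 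No square root is ever taken, because one works with $\partial_t^2-\Delta_A$ directly.

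Your approach can in principle be made to work, but the passage through $\sqrt{-\Delta_A}$ that you flag is a genuine obstacle, not routine. Projecting $Q_\lambda$ onto a dyadic spectral window destroys the explicit spatial profile, so the $L^q$- and $\dot H^s$-norms of the projected datum are no longer computable by rescaling; you would need separate arguments to show those norms survive the projection, which typically requires either resolvent bounds or Bernstein-type inequalities adapted to $-\Delta_A$ that are not available here. Sharpening the quasi-mode order does not automatically help either, since the relevant comparison is between the time scale $T_\lambda\sim\lambda/\|(-\Delta_A-\lambda^2)Q_\lambda\|_{L^2}$ and the ratio $\|Q_\lambda\|_{\dot H^s}/\|Q_\lambda\|_{L^q}$, and you have not checked their compatibility. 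Finally, the quasi-mode you write down, localised in a ball of radius $\lambda^{-\alpha}$ around $P$, is closer in spirit to \cite{GVV} than to \cite{FG}; the construction in \cite{FG} (and in this paper) is the anisotropic slab $\{|z-R|<R^\gamma,\ |y|<z\}$, which is what makes the elementary norm computations of Lemma~\ref{lem:lemapre} possible. The paper's direct PDE method is both simpler and sidesteps the square-root issue entirely; I would recommend recasting your argument along those lines.
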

The proof of Theorem \ref{thm:counterwave} is completely analogous to the one of Theorem \ref{thm:counterdirac}. Indeed, the only difference is that the examples in \eqref{eq:AA} depend on the dimension, so that the scalings which we perform in Section \ref{sec:reduction} need some natural modification. We omit here further details; see also \cite{FG} in which the analogous case for the magnetic Schr\"odinger equation is handled.
\end{remark}
The rest of the paper is devoted to the proof of Theorem \ref{thm:counterdirac}. In section 
\ref{sec:reduction}, we study the operator 
\begin{equation*}
T:=-i\alpha\cdot\nabla_y-\alpha\cdot(y_2,-y_1,0)
\end{equation*}
and we produce some suitable standing wave for the Dirac evolution with constant magnetic field, describing its properties in Lemma \ref{lem:lemapre}. In section \ref{sec:proof} we perform the proof of the main theorem, showing that the solution of \eqref{eq:Diracmagnintr} is mostly concentrated around the above mentioned standing wave, if the potential behaves like in \eqref{eq:A}.

\section{An approximating operator}\label{sec:reduction}
The main idea in the proof of Theorem \ref{thm:counterdirac} is to show that most of the mass of the solution of \eqref{eq:Diracmagnintr} is concentrated around a non-dispersive function, namely a standing wave for a suitable dimensionless operator, which is suggested by the explicit form of the magnetic field. In this section, we introduce the fundamental tools which allow us to reduce, by suitable space-time localizations, to such a situation. 

In the following, we always denote by 
\begin{equation*}
  x=(y,z)\in\R^3,
  \qquad
  y=(y_1,y_2)\in\R^2,
  \qquad
  z\in\R.
\end{equation*}
Let consider the operator 
\begin{equation}
T:=-i\alpha\cdot\nabla_y-\alpha\cdot(y_2,-y_1,0).
\end{equation}
Notice that $T$ is a (massless) Dirac operator with constant magnetic field $B=(0,0,2)$.
As it is well known (see \cite{T}, Section 7.1.3 for details),
$T$ has compact resolvent and in particular its spectrum reduces to a discrete set of eigenvalues. Moreover, the eigenfunctions $v=v(y)$, solving
\begin{equation}\label{eq:eigenproblem}
Tv(y)=\lambda v(y),
\end{equation}
for some eigenvalue $\lambda\in\R$, have exponential decay, and in addition 
\begin{equation}\label{eq:lp}
  v(y) \in \bigcap _{p=1}^{\infty} L^{p}(\R^{2}),
  \qquad
  \Delta v \in \bigcap _{p=1}^{\infty} L^{p}(\R^{2}).
\end{equation}
Let us fix an eigenvalue $\lambda\in\R$, with a corresponding eigenfunction $v$, and define 
$\omega:\R^2\times(0,\infty)$ as
\begin{equation}\label{eq:omega1}
\omega(x) := v\left(\frac{y}{\sqrt{z^{\delta}}}\right),
\qquad
x:=(y,z) \in \mathbb{R}^{2} \times (0, \infty),
\end{equation}
where $\delta$ is the same as in \eqref{eq:A}.
By a direct computation, we see that $\omega$ satisfies 
\begin{equation}
 \left(-i\alpha\cdot\nabla_y-{z^{-\delta}}\alpha\cdot M(y,0)^t\right)\omega=\frac{\lambda}{\sqrt{z^{\delta}}} \omega,
\end{equation}
where $M$ is given by \eqref{eq:M}. Starting by $\omega$, we now create a standing wave $W(t,y,z)$ as follows: 
\begin{equation}\label{eq:epo5}
W(t,y,z) = e^{i( \lambda t /\sqrt{z^{\delta}})}\omega(y,z),
\qquad
(t,y,z) \in \mathbb{R} \times \mathbb{R}^{2} \times (0, \infty).
\end{equation}
By direct computations, it turns out that $W$ solves
\begin{equation}\label{eq:solves}
i\partial_{t}W - \left(i\alpha\cdot\nabla +{z^{-\delta}}\alpha\cdot M(y,0)^t\right) W
 = -i\alpha_3\partial_z W.
\end{equation}
Moreover,
\begin{align*}
\partial_z W&=\partial_z(e^{i(\lambda t/\sqrt{z^\delta})}\omega(y,z))
\\
&=-\frac{\delta}{2}\frac{i\lambda t}{z^{1+\delta/2}}e^{i(\lambda t/\sqrt{z^\delta})}v\left(\frac{y}{\sqrt{z^\delta}}\right)-\frac{\delta}{2}\frac{e^{i(\lambda t/\sqrt{z^\delta})}}{z^{1+\delta/2}}y\cdot\nabla v\left(\frac{y}{\sqrt{z^\delta}}\right),
\end{align*}
where the gradient in the last term of the previous identity is made with respect to the 2D variable $\frac{y}{\sqrt{z^{\delta}}}$.
Therefore we obtain by \eqref{eq:solves} that
\begin{equation}
i\partial_{t}W - \left(i\alpha\cdot\nabla +{z^{-\delta}}\alpha\cdot M(y,0)^t\right) W
 = F,
\end{equation}
where 
\begin{equation}\label{eq:F}
F(t,y,z)=-\frac{i e^{i(\lambda t/\sqrt{z^\delta})}}{z}\alpha_3\left\{-\frac{\delta}{2}\frac{i\lambda t}{z^{\delta/2}}v\left(\frac{y}{\sqrt{z^\delta}}\right)-\frac{\delta}{2}G\left(\frac{y}{\sqrt{z^\delta}}\right)\right\},
\end{equation}
with
\begin{equation}\label{eq:G}
G(y) = y \cdot \nabla_{y}v(y).
\end{equation}
We now introduce two real-valued cutoff functions $\psi, \chi \in C_0^\infty(\R)$ with the following properties:
\begin{align}\label{eq:psi}
&\psi(z) = 0\quad \text{for }|z| > 1,
\qquad \qquad  \qquad
\psi(z) = 1\quad \text{for }|z| < 3/4,\\
&\chi(z)=0\quad \text{for }|z|>1, |z|<1/4, 
\qquad 
\chi(z) = 1\quad \text{for }1/2<|z|<3/4.
\nonumber
\end{align}
Let us fix a parameter $\gamma \in (1/2,1)$, and for any $R>0$ denote by
\begin{equation}\label{eq:psiR}
  \psi_{R}(z) := \psi\left(\frac{z - R}{R^{\gamma}}\right).
\end{equation}
Finally, truncate $W$ as follows:
\begin{equation}\label{eq:WR}
 W_{R}(t,y,z) := W(t,y,z)\psi_R(z)\psi\left(\frac{|y|^{2}}{z^{2}}\right)\chi\left(\frac{|y|^{2}}{z^{2}}\right).
\end{equation}
Again, a direct computation shows that $W_R$ solves the Cauchy problem
\begin{equation}\label{eq:sistemaWR}
  \begin{cases}
    i\partial_tW_R-(i\alpha\cdot\nabla+z^{-\delta}\alpha\cdot M(y,0)^t)W_{R}
    
    =
    F_R
    \\
    W_{R}(0,y,z) = f_{R}(y,z),
  \end{cases}
\end{equation}
where the initial datum is given by
\begin{equation}\label{eq:fR}
f_{R}(y,z) = \psi_{R}(z)\psi\left(\frac{|y|^{2}}{z^{2}}\right)\chi\left(\frac{|y|^{2}}{z^{2}}\right)\omega(y,z).
\end{equation}
Moreover,
\begin{equation}\label{eq:FR}
F_{R}(t,y,z) =  \psi_{R}\psi\chi F + G_{R},
\end{equation}
where $F$ is given by \eqref{eq:F} and 
$G_R$ has the following form:
\begin{align}\label{eq:GR}
G_{R}(t,y,z) = &e^{i( \lambda t /\sqrt{z^{\delta}})}\left\{-\frac{2i}{z^2}\psi_R(\psi'\chi+\psi\chi')\omega(\alpha_1,\alpha_2)\cdot(y_1,y_2)
\right.
\\
&
\left.-i\alpha_3\psi'_R\psi\chi\omega+i\alpha_3\frac{2|y|^2}{z^3}\psi_R\psi'\chi\omega+i\alpha_3\frac{2|y|^2}{z^3}\psi_R\psi\chi'\omega\right\}.
\nonumber
\end{align}

The next lemma is the main result of this section.

\begin{lemma}\label{lem:lemapre}
Let $p,q\in(1,\infty)$, $\sigma=\frac{1}{p}-\frac{1}{q}+\frac{1}{2}$ and $\gamma\in(1/2,1)$; then 
\begin{equation}\label{eq:epo21}
 \|f_{R}\|_{\dot{H}^{\sigma}_{x}} \leq C R^{(\delta+\gamma)/2-\sigma\gamma},
\end{equation}
\begin{equation}\label{eq:epo22}
 \|W_{R}\|_{L^{p}_{T}L^{q}_{x}} \geq C T^{1/p} R^{(\delta+\gamma)/q},
\end{equation}
\begin{equation}\label{eq:epo23}
 \|F_{R}\|_{L^{p}_{T}\dot{H}^{2\sigma}_q} \leq C T^{1/p} R^{(\delta+\gamma)/q-2\sigma\gamma}\max\{R^{-\gamma}, TR^{-(1+\delta/2)},R^{-(2-\delta/2)}\},
\end{equation}
for all $R > 2, T > 0$, and some constant
$C = C(q,\gamma) > 0$. In particular, if $(p,q)\neq(\infty,2)$ is a an admissible couple in the sense of \eqref{eq:admis},
and $\beta > 0$, then the following estimates hold
\begin{equation}\label{eq:epo25}
\frac{\|W_{R}\|_{L^{p}((0,R^{\beta}); L^{q}_{x})}}{\|f_{R}\|_{\dot{H}^{\sigma}_{x}}} \geq CR^{(\beta - (\delta-\gamma))/p},
\end{equation}
\begin{equation}\label{eq:epo26}
\frac{\|W_{R}\|_{L^{p}((0,R^{\beta}); L^{q}_{x})}}{\|F_{R}\|_{L^{p'}((0,R^{\beta}); \dot{H}^{2\sigma}_{q'})}} \geq CR^{\kappa},
\end{equation}
for any $R>2$,
where
\begin{equation*}
\kappa = \kappa(\gamma, \beta, p) = 2\left(\frac{\beta  - (\delta-\gamma)}{p}\right) +\min\{\gamma - \beta, 1+\delta/2-2\beta, 2-\delta/2-\beta\}
\end{equation*}
and the constant $C > 0$ does not depend on $R$.
\end{lemma}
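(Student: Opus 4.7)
The plan is to prove the three scaling estimates \eqref{eq:epo21}--\eqref{eq:epo23} by direct rescaling in the natural variables $\tilde y = y/z^{\delta/2}$ and $\tilde z = (z-R)/R^{\gamma}$, in which $W_R$ becomes, up to the time phase $e^{i\lambda t/\sqrt{z^{\delta}}}$ and the spatial cutoffs (which are identically $1$ on the essential support of $v$), a fixed bump function $v(\tilde y)\,\psi(\tilde z)$ with Jacobian $R^{\delta+\gamma}$. The integrability \eqref{eq:lp} of $v$ and its derivatives is used throughout to replace the rescaled profile by a constant. The consequences \eqref{eq:epo25}--\eqref{eq:epo26} then follow by substituting $T=R^{\beta}$ and invoking the wave-admissibility identity $\tfrac{2}{p}+\tfrac{2}{q}=1$, equivalently $\sigma=2/p$, which is precisely what cancels the $\delta$-dependent pieces in the final exponent arithmetic.

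For \eqref{eq:epo22}, the modulus $|W_R|$ is $t$-independent, so restricting the spatial integration to a subset on which $\psi_R$, $\psi$, $\chi$ all equal $1$ yields $\|W_R(t,\cdot)\|_{L^{q}_x}^{q}\gtrsim R^{\delta+\gamma}\|v\|_{L^{q}}^{q}$, and integration over $t\in(0,T)$ produces $T^{1/p}$. For \eqref{eq:epo21}, the same rescaling gives $\|f_R\|_{L^{2}}\sim R^{(\delta+\gamma)/2}$; the fractional upgrade to $\dot H^{\sigma}$ is obtained by interpolation between $L^{2}$ and $\dot H^{1}$, the $\dot H^{1}$-norm being dominated by the $z$-derivative of $\psi_{R}$, which costs $R^{-\gamma}$ per derivative.

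The main technical step is \eqref{eq:epo23}. After splitting $F_R=\psi_R\psi\chi\,F+G_R$ as in \eqref{eq:F}--\eqref{eq:GR}, three pointwise contributions are dominant: the $t$-dependent piece of $F$, proportional to $(\lambda t/z)z^{-\delta/2}v(\tilde y)$, of size $TR^{-1-\delta/2}$; the static piece of $F$, proportional to $G(\tilde y)/z$, of size $R^{-1}$; and the $\psi_R'$-piece of $G_R$, of size $R^{-\gamma}$. All remaining pieces of $G_R$ involve derivatives $\psi'$ or $\chi'$ evaluated at $|y|^{2}/z^{2}$, hence localize where $|y|\sim z$; there $v(y/\sqrt{z^{\delta}})$ is exponentially small, so these terms can be absorbed into the constant. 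Rescaling each surviving piece in $L^{q}_x$ yields the prefactor $R^{(\delta+\gamma)/q}$, and the $R^{-2\sigma\gamma}$ factor comes from the fractional derivative cost. The static size $R^{-1}$ is absorbed by $R^{-\gamma}$ since $\gamma<1$, while the $R^{-(2-\delta/2)}$ term in the max arises from a finer bookkeeping of mixed $y$- and $z$-derivatives of the $G(\tilde y)/z$-piece.

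Finally, the ratios \eqref{eq:epo25}--\eqref{eq:epo26} are pure exponent arithmetic. With $T=R^{\beta}$ and $\sigma=2/p$, the leading exponents collapse to $(\beta-(\delta-\gamma))/p$ and $2(\beta-(\delta-\gamma))/p$, respectively, because admissibility forces $(\delta+\gamma)/q + \sigma\gamma - (\delta+\gamma)/2 = (\gamma-\delta)/p$; the $\min$ in $\kappa$ arises as minus the $\log_R$ of the maximum in \eqref{eq:epo23}, shifted by $-\beta$ to account for the extra factor $T^{1/p'}=R^{\beta/p'}$ in the denominator of \eqref{eq:epo26}. The main obstacle I anticipate is the careful identification of the dominant versus negligible pieces of $G_R$ and the verification that the $R^{-2\sigma\gamma}$ derivative cost controls each surviving term uniformly; beyond this, the computation is a direct scaling argument, analogous in spirit to the one performed in \cite{FG} for the magnetic Schr\"odinger equation.
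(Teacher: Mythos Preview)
Your approach is essentially the paper's: the same rescaling $\tilde y=y/\sqrt{z^{\delta}}$, the same $L^{2}$--$\dot H^{1}$ (resp.\ $L^{q}$--$\dot H^{2}_{q}$) interpolation for \eqref{eq:epo21} and \eqref{eq:epo23}, and the same term-by-term analysis of $F_R=\psi_R\psi\chi F+G_R$.

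One point deserves comment. Your observation that the $\psi'$- and $\chi'$-pieces of $G_R$ are exponentially small is correct and is in fact sharper than what the paper does: on their support $|y|\sim z\sim R$, so $|y|/\sqrt{z^{\delta}}\sim R^{1-\delta/2}\to\infty$ and the exponential decay of $v$ kills these terms faster than any power. The paper instead bounds them crudely and obtains the contributions $R^{-(2-\delta/2)}$ and $R^{-(3-\delta)}$; this is where the $R^{-(2-\delta/2)}$ in the stated $\max$ actually comes from, \emph{not} from ``mixed derivatives of the $G(\tilde y)/z$-piece'' as you suggest. Since $2-\delta/2>1>\gamma$ for $1<\delta<2$, this entry is in any case dominated by $R^{-\gamma}$ and is redundant in the $\max$, so your shortcut still yields the stated inequality. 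Just correct the attribution in your write-up.
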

\begin{proof}
For a given function $L(y)$, denote by
\begin{equation}\label{eq:epo28}
\Lambda(y,z) := L\left(\frac{y}{\sqrt{z^{\delta}}}\right),
\end{equation}
with $(y,z) \in \mathbb{R}^{2} \times \mathbb{R}$.

{\bf Proof of \eqref{eq:epo22}.} It is sufficient to show that, if $0\neq L\in \bigcap _{p=1}^{\infty} L^{p}(\mathbb{R}^{2})$, then the following estimates hold:
\begin{equation}\label{eq:epo29}
cR^{(\delta+\gamma)/q} \leq  \|\Lambda\psi_{R}\psi\chi\|_{L^{q}_{x}} \leq CR^{(\delta+\gamma)/q},
\end{equation}
for all $R>1$,
where $c = c(q,L) > 0$ and $C = C(q,L) > 0$. Here $\psi$, $\chi$ and $\psi_{R}$ are defined by \eqref{eq:psi}, \eqref{eq:psiR}. Indeed, \eqref{eq:epo22} follows from \eqref{eq:epo29}, with the choice $L(y)=v(y)$. Notice that by the properties of $\psi_{R}$ and $\psi$ we have
\begin{align*}
& \int_{R-3R^{\gamma}/4}^{R+3R^{\gamma}/4}dz\int_{\frac{\sqrt{z^{2-\delta}}}{\sqrt{2}}<|y| < \frac{\sqrt{3}\sqrt{z^{2-\delta}}}{2}}|L(y)|^{q}z^{\delta}dy
\\
&\ \ \
= \int_{R-3R^{\gamma}/4}^{R+3R^{\gamma}/4}dz\int_{\frac{z}{\sqrt{2}}<|y| < \frac{\sqrt{3}z}{2}}|\Lambda|^{q}dy
\leq \int_{\mathbb{R}^{n}}|\Lambda\psi_{R}\psi\chi|^{q}dydz
\\
&\ \ \
\leq \int_{R-R^{\gamma}}^{R+R^{\gamma}}dz\int_{|y| < z}|\Lambda|^{q}dy
= \int_{R-R^{\gamma}}^{R+R^{\gamma}}dz\int_{|y| < \sqrt{z^{2-\delta}}}|L(y)|^{q}z^{\delta}dy,
\end{align*}
which implies \eqref{eq:epo29}, and consequently \eqref{eq:epo22}.

{\bf Proof of \eqref{eq:epo21}.}
First notice that, arguing as above we obtain
\begin{align*}
&cR^{(\delta+\gamma)/q} \leq  \|\Lambda\psi_{R}\psi^{'}\chi\|_{L^{q}_{x}} \leq CR^{(\delta+\gamma)/q}
\\
&cR^{(\delta+\gamma)/q - \gamma} \leq  \|\Lambda\psi^{'}_{R}\psi\chi\|_{L^{q}_{x}} \leq CR^{(\delta+\gamma)/q - \gamma}
\\
&cR^{(\delta+\gamma)/q} \leq  \|\Lambda\psi_{R}\psi\chi'\|_{L^{q}_{x}} \leq CR^{(\delta+\gamma)/q}.
\end{align*}
Now, \eqref{eq:epo29} implies that
\begin{equation}\label{eq:epo30}
 \|\Lambda\psi_{R}\psi\chi\|_{L^{2}_{x}}\leq C R^{(\delta+\gamma)/2}.
\end{equation}
We need now to estimate $\|\Lambda\psi_{R}\psi\chi\|_{\dot{H}^1_{x}}$; in order to do this, write
\begin{equation}\label{eq:norma12}
\nabla(\Lambda\psi_{R}\psi\chi)=\Lambda(\nabla\psi_R)\psi\chi+\Lambda\psi_R\nabla(\psi\chi)
+(\nabla\Lambda)\psi_R\psi\chi.
\end{equation}
For the first term in \eqref{eq:norma12}, we get
\begin{equation}\label{eq:gradpsiR}
\|\Lambda(\nabla\psi_R)\psi\chi\|_{L^2_x}=\|\Lambda\psi'_R\psi\chi\|_{L^2_x}\leq CR^{(\delta+\gamma)/2-\gamma}.
\end{equation}
The second term in \eqref{eq:norma12} can be treated analogously as follows:
\begin{equation}\label{eq:second}
\|\Lambda\psi_R\nabla(\psi\chi)\|_{L^2_x}\leq\|\Lambda\psi_R\nabla\psi\chi\|_{L^2_x}+\|\Lambda\psi_R\psi\nabla\chi\|_{L^2_x}.
\end{equation}
Denote by
\begin{equation}\label{eq:PSI}
   \Psi(y,z) := \frac{|y|}{\sqrt{z^{\delta}}} L\left(\frac{y}{\sqrt{z^{\delta}}}\right);
\end{equation}
since 
\begin{equation}
  |\nabla\psi|\leq\frac{|y|}{z^2}\psi',\quad |\nabla\chi|\leq\frac{|y|}{z^2}\chi',
\end{equation}
provided that $|y| < |z|$, we can estimate
\begin{align}
\|\Lambda\psi_R\nabla\psi\chi\|_{L^2_x}&\leq\|\frac{|y|}{z^2}\Lambda\psi_R\psi'\chi\|_{L^2_x}
\\
&\leq CR^{-(2-\delta/2)}\|\Psi\psi_R\psi'\chi\|_{L^2_x}\leq CR^{(\delta+\gamma)/2-(2-\delta/2)},
\nonumber
\end{align}
\begin{align}
\|\Lambda\psi_R\psi\nabla\chi\|_{L^2_x}&\leq\|\frac{|y|}{z^2}\Lambda\psi_R\psi\chi'\|_{L^2_x}
\\
&\leq CR^{-(2-\delta/2)}\|\Psi\psi_R\psi\chi'\|_{L^2_x}\leq CR^{(\delta+\gamma)/2-(2-\delta/2)}.
\nonumber
\end{align}
Therefore, by \eqref{eq:second} we obtain
\begin{equation}\label{eq:gradpsichi}
\|\Lambda\psi_R\nabla(\psi\chi)\|_{L^2_x}\leq CR^{(\delta+\gamma)/2-(2-\delta/2)}.
\end{equation}
We now study the last term in \eqref{eq:norma12}. By \eqref{eq:epo28},
\begin{align}\label{eq:soboleveigen}
\|(\nabla\Lambda)\psi_R\psi\chi\|_{L^2_x}&=\|z^{-\delta/2}(\nabla L)\left(\frac{y}{z^{\delta/2}}\right)\psi_R\psi\chi\|_{L^2_x}\\
&\leq CR^{-(a(1-\delta/2)+\delta/2)}\|\widetilde{M}\psi_R\psi\chi\|_{L^2_x}, 
\nonumber
\end{align}
where $a$ is a positive number such that
\begin{equation}\label{eq:conditiona}
a>\frac{\gamma-\delta/2}{1-\delta/2}.
\end{equation}
Here
\begin{equation*}
  \widetilde{M}(y,z) =  \left(|y|/\sqrt{z^{\delta}}\right)^{a} (\nabla \Lambda)\left(y/\sqrt{z^{\delta}}\right).
\end{equation*} 
Observe that since $\gamma\in(1/2,1)$ we have 
\begin{equation}
\frac{\gamma-\delta/2}{1-\delta/2}<1,
\end{equation}
hence we can take take $a=1$.
Explicitly, we have 
\begin{align*}\label{eq:epo62}
\|\widetilde{M}\psi_{R}\psi\chi\|_{L^{2}_{x}}^{2}
 & = \int_{R-R^{\gamma}}^{R+R^{\gamma}}dz\int_{|y| < z}\left| \left(\frac{|y|}{\sqrt{z^{\delta}}}\right)^{a} (\nabla \Lambda)\left(\frac{y}{\sqrt{z^{\delta}}}\right)\right|^{2}dy
 \\
 & = \int_{R-R^{\gamma}}^{R+R^{\gamma}}dz\int_{|y| < \sqrt{z^{2-\delta}}}\left||y|^{a}(\nabla \Lambda)(y)\right|^{q'}z^{\delta}dy
 \\
 &\leq CR^{(\delta+\gamma)/2},
\end{align*}
for all $R>1$ and $1/2<\gamma<1$. Hence by \eqref{eq:soboleveigen} it follows that
\begin{equation}\label{eq:gradlambda}
\|(\nabla\Lambda)\psi_R\psi\chi\|_{L^2_x}\leq CR^{(\delta+\gamma)/2-(a(1-\delta/2)+\delta/2)}
\end{equation}
Finally, if we take $L(y)=v(y)$, from \eqref{eq:norma12}, \eqref{eq:gradpsiR}, \eqref{eq:gradpsichi}, \eqref{eq:gradlambda}  and the condition \eqref{eq:conditiona} it holds
\begin{equation}\label{eq:epo31}
\|\Lambda\psi_{R}\psi\chi\|_{\dot{H}^1_{x}}\leq C R^{(\delta+\gamma)/2-\gamma}.
\end{equation}
In conclusion, \eqref{eq:epo21} follows by interpolation between \eqref{eq:epo30} and 
\eqref{eq:epo31}.

{\bf Proof of \eqref{eq:epo23}.}
In order to conclude the proof of the Lemma, it remains to prove \eqref{eq:epo23}.
Notice that by \eqref{eq:epo29} it turns out that
\begin{equation}\label{eq:epo32}
 \|\Lambda\psi_{R}\psi\chi\|_{L^{q}_{x}}\leq C R^{(\delta+\gamma)/q},
\end{equation}
and with the same argument as above it can be shown that
\begin{equation}\label{eq:epo33}
 \|\Lambda\psi_{R}\psi\chi\|_{\dot{H}^{2}_q}\leq C R^{(\delta+\gamma)/q-2\gamma}.
\end{equation}
Therefore, by interpolation between \eqref{eq:epo32} and \eqref{eq:epo33}, we obtain
\begin{equation}\label{eq:epo34}
 \|\Lambda\psi_{R}\psi\chi\|_{\dot{H}^{2\sigma}_q}\leq C R^{(\delta+\gamma)/q-2\sigma\gamma}.
\end{equation}
Due to the explicit form of $F_R$,
in order to prove \eqref{eq:epo23} we need some control on the two terms in \eqref{eq:FR}. We claim that the following estimates hold
\begin{equation}\label{eq:epo35}
 \|\psi_R\psi\chi F\|_{L^{p}_{T}\dot{H}^{2\sigma}_q}\leq C T^{1/p} R^{(\delta+\gamma)/q-2\sigma\gamma}\max\{R^{-1}, TR^{-(1+\delta/2)}\},
\end{equation}
\begin{equation}\label{eq:epo36}
\|G_R\|_{L^{p}_{T}\dot{H}^{2\sigma}_q}\leq C T^{1/p} R^{(\delta+\gamma)/q-2\sigma\gamma}\max\{R^{-(2-\delta/2)}, R^{-\gamma}, R^{-(3-\delta)}\},
\end{equation}
under the conditions $1/2<\gamma<1$ and $1<\delta<2$. 

\underline{Proof of \eqref{eq:epo35}.}
By \eqref{eq:F}, it is sufficient to estimate terms of the type
\begin{equation}
\frac{t}{z^{1+\delta/2}} \Lambda\psi_{R}\psi\chi,
\qquad
\frac{1}{z} \Lambda\psi_{R}\psi\chi,
\end{equation}
where $\Lambda$ is defined by \eqref{eq:epo28} and L has to be chosen in a suitable way. Let us consider the first term. Due to the properties of $\psi_R$, in the support of $\frac{t}{z^{1+\delta/2}}\Lambda\psi_R\psi\chi$ we have that $t/z^{1+\delta/2}\leq Ct/R^{1+\delta/2}$; hence by \eqref{eq:epo34} we obtain
\begin{align}\label{eq:epo37}
\|\frac{t}{z^{1+\delta/2}} \Lambda\psi_{R}\psi\chi\|_{L^{p}_{T}\dot{H}^{2\sigma}_q} &\leq CT^{1+1/p}\frac{1}{R^{1+\delta/2}}\|\Lambda\psi_{R}\psi\chi\|_{\dot{H}^{2\sigma}_q}
\\
&
 \leq  CT^{1+1/p}R^{(\delta+\gamma)/q-2\sigma\gamma-(1+\delta/2)}.
 \nonumber
\end{align}
By the same arguments, we can deduce that
\begin{equation}\label{eq:eponueva}
\|\frac{1}{z} \Lambda\psi_{R}\psi\chi\|_{L^{p}_{T}\dot{H}^{2\sigma}_q} \leq CT^{1/p}R^{(\delta+\gamma)/q-2\sigma\gamma-1}.
\end{equation}
Finally, \eqref{eq:epo37} and \eqref{eq:eponueva} imply \eqref{eq:epo35}.

\underline{Proof of \eqref{eq:epo36}.}
Denote by
\begin{equation}\label{eq:tetapsi}
\Theta(y,z) := \left(\frac{|y|}{\sqrt{z^{\delta}}}\right)^{2} L\left(\frac{y}{\sqrt{z^{\delta}}}\right),
\end{equation}
for $(y,z) \in \mathbb{R}^{2} \times \mathbb{R}$.
Arguing as in the previous cases, we easily obtain
\begin{align*}
&\|\frac{|y|}{z^2}\Lambda\psi_R\psi'\chi\|_{L^{p}_T\dot{H}^{2\sigma}_q}=\|\frac{|y|}{z^{2-\delta/2}}\Psi\psi_R\psi'\chi\|_{L^{p}_T\dot{H}^{2\sigma}_q}\leq CT^{1/p}R^{(\delta+\gamma)/q-2\sigma\gamma-(2-\delta/2)},
\\
&\|\frac{|y|}{z^2}\Lambda\psi_R\psi\chi'\|_{L^{p}_T\dot{H}^{2\sigma}_q}=\|\frac{|y|}{z^{2-\delta/2}}\Psi\psi_R\psi\chi'\|_{L^{p}_T\dot{H}^{2\sigma}_q}\leq CT^{1/p}R^{(\delta+\gamma)/q-2\sigma\gamma-(2-\delta/2)},
\\
&\|\Lambda\psi^{'}_R\psi\chi\|_{L^{p}_T\dot{H}^{2\sigma}_q}\leq C T^{1/p}R^{(\delta+\gamma)/q-2\sigma\gamma-\gamma},
\\
&\|\frac{|y|^2}{z^3}\Lambda\psi_R\psi^{'}\chi\|_{L^{p}_T\dot{H}^{2\sigma}_q}=\|\frac{1}{z^{3-\delta}}\Theta\psi_R\psi^{'}\chi\|_{L^{p}_T\dot{H}^{2\sigma}_q}\leq CT^{1/p}R^{(\delta+\gamma)/q-2\sigma\gamma-(3-\delta)},
\\
&\|\frac{|y|^2}{z^3}\Lambda\psi_R\psi\chi'\|_{L^{p}_T\dot{H}^{2\sigma}_q}=\|\frac{1}{z^{3-\delta}}\Theta\psi_R\psi\chi'\|_{L^{p}_T\dot{H}^{2\sigma}_q}\leq CT^{1/p}R^{(\delta+\gamma)/q-2\sigma\gamma-(3-\delta)}.
\end{align*}
Therefore \eqref{eq:epo36} follows by \eqref{eq:GR} and the above estimates.

\medskip

Now \eqref{eq:epo23} follows by \eqref{eq:FR}, \eqref{eq:epo35}, and \eqref{eq:epo36}.
Finally, estimates \eqref{eq:epo25} and \eqref{eq:epo26} are immediate consequences of \eqref{eq:epo21}, \eqref{eq:epo22} and \eqref{eq:epo23}, with the choice $T:=R^\beta$.
 \end{proof}

\section{Proof of Theorem \ref{thm:counterdirac}}\label{sec:proof}

We are now ready to prove Theorem \ref{thm:counterdirac}. For any $x\in\R^3$, denote by $x=(y,z)$, where $y=(y_1,y_2)\in\R^2$, $z\in\R$. By homogeneity we have 
\begin{equation}\label{eq:omog}
A(y,z)=z^{1-\delta}A\left(\frac{y}{z}, 1\right),
\end{equation}
for all $(y,z)\in\R^2\times(0,\infty)$. Let us recall the explicit form
\begin{equation}\label{eq:expansion}
\mathcal D_A=-i\alpha\cdot\nabla-\alpha\cdot A.
\end{equation}
Let $P=(0,0,1)$; since $A(P)=0$ and the differential $DA(P)=M$, the first-order Taylor expansion of $A$ around $P$ in \eqref{eq:omog} gives
\begin{align}\label{eq:taylor}
\alpha\cdot A(y,z)
&=z^{1-\delta}\alpha\cdot\left\{M\left(\frac{y}{z}, 0\right)^{t}
+
R_{1}\left(\frac{y}{z}\right)\right\}
\\
&
= z^{-\delta}\alpha\cdot M(y,0)^{t}+z^{1-\delta}\alpha\cdot R_1\left(\frac{y}{z}\right),
\nonumber
\end{align}
where the rest $R_1$ satisfies
\begin{equation}\label{eq:epo42}
\left|R_{1}\left(\frac{y}{z}\right)\right| \leq C\frac{|y|^{2}}{z^{2}},
\end{equation}
for all $(y,z)\in\R^{2}\times(0,+\infty)$ such that $|y|<|z|$.
We can now select a couple $(\lambda, v(y))$ which satisfies the eigenvalue problem \eqref{eq:eigenproblem}; hence from now on the functions $W_R$, $f_R$ and $F_R$ are fixed by \eqref{eq:WR}, \eqref{eq:fR} and \eqref{eq:FR}.

Denote by $u_R$ the solution of \eqref{eq:Diracmagnintr}; 
due to \eqref{eq:expansion} and \eqref{eq:taylor} we can rewrite the initial value problem as follows
\begin{equation}\label{eq:epo69}
i\partial_{t}u_{R}-\left(i\alpha\cdot\nabla+z^{-\delta}\alpha\cdot M(y,0)^t\right)u_R+z^{1-\delta}\alpha\cdot R_{1}\left(\frac{y}{z}\right)u_{R} =\tilde{F}_{R},
\end{equation}
where
\begin{equation}\label{eq:epo69datum}
  u_{R}(0, y, z) = f_{R},
\end{equation}
and
\begin{equation}\label{eq:epo70}
 \tilde{F}_{R}(t,y,z) = \chi_{(0, R^{\beta})}(t)\left\{F_{R}-z^{1-\delta}\alpha\cdot R_{1}\left(\frac{y}{z}\right)W_{R}\right\},
\end{equation}
with $\beta$ the same as in Lemma \ref{lem:lemapre}, and $f_R$, $F_R$ are given by \eqref{eq:fR}, \eqref{eq:FR}. Notice that due to \eqref{eq:sistemaWR} and \eqref{eq:epo70}, $u_R$  coincides with the solution $W_R$ of \eqref{eq:sistemaWR} for small times $t\in(0,R^{\beta})$. We prove the following Lemma. 
\begin{lemma}\label{lem:mainodd}
  Let $(p,q)$ be an admissible couple in the sense of \eqref{eq:admis}, with $(p,q)\neq(\infty,2)$, and let $(p',q')$ be the dual couple. The following estimate holds:
  \begin{equation}\label{eq:epo71}
    \frac{\|W_{R}\|_{L^{p}((0,R^{\beta});L^{q}_{x})}}{\|\tilde{F}_{R}\|_{L^{p'}((0,R^{\beta});\dot{H}^{2\sigma}_{q'})}} \geq CR^{\mu}, 
  \end{equation}
  for all $1/2 < \gamma < 1$,
  where
  \begin{equation}\label{eq:epo72}
     \mu  =\mu(\delta, \gamma, \beta, p) = 2\left(\frac{\beta-(\delta-\gamma)}{p}\right)
     + \min\left\{\gamma - \beta, 1+\frac{\delta}{2}-2\beta, 2-\frac{\delta}{2}-\beta\right\},
  \end{equation}
  and $\gamma$ is the same as in \eqref{eq:psiR}.
\end{lemma}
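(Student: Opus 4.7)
The plan is to apply the triangle inequality to decompose
$$\|\tilde F_R\|_{L^{p'}((0,R^\beta);\dot{H}^{2\sigma}_{q'})}\leq \|F_R\|_{L^{p'}((0,R^\beta);\dot{H}^{2\sigma}_{q'})}+\bigl\|z^{1-\delta}\alpha\cdot R_1(y/z)\,W_R\bigr\|_{L^{p'}((0,R^\beta);\dot{H}^{2\sigma}_{q'})},$$
and to estimate the two pieces separately. For the first piece I would simply invoke estimate \eqref{eq:epo26} of Lemma~\ref{lem:lemapre}, applied with the dual exponents $(p',q')$ and with $T=R^\beta$: combined with the lower bound \eqref{eq:epo22} for $\|W_R\|_{L^p_tL^q_x}$, it already produces the ratio $\geq CR^\kappa$ with $\kappa$ identical to the formula for $\mu$ in \eqref{eq:epo72}.

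The remaining and only genuinely new task is to control the perturbative term $z^{1-\delta}\alpha\cdot R_1(y/z)\,W_R$ and to show that it does not deteriorate the bound. The key input is the pointwise estimate \eqref{eq:epo42}: on $\supp W_R$ one has $|y|/|z|\sim 1$ and $|z|\sim R$ (thanks to the cutoffs $\psi_R\psi\chi$), whence
$$\bigl|z^{1-\delta}\alpha\cdot R_1(y/z)\bigr|\leq C\,|y|^{2}z^{-1-\delta}\leq CR^{1-\delta}.$$
Moreover, each spatial derivative of the smooth coefficient $z^{1-\delta}R_1(y/z)$ loses at most an extra factor $R^{-1}$, which is negligible compared with the worst derivative scale $R^{-\gamma}$ of $W_R$ (since $\gamma<1$). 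Applying the Leibniz rule and repeating, mutatis mutandis, the $L^{q'}$- and $\dot{H}^{2}_{q'}$-computations already performed in the proof of Lemma~\ref{lem:lemapre} (see \eqref{eq:epo32}--\eqref{eq:epo33}), then interpolating to $\dot{H}^{2\sigma}_{q'}$ and integrating in time on $(0,R^\beta)$, I would obtain
$$\bigl\|z^{1-\delta}\alpha\cdot R_1(y/z)\,W_R\bigr\|_{L^{p'}((0,R^\beta);\dot{H}^{2\sigma}_{q'})}\leq CR^{1-\delta}\cdot R^{\beta/p'+(\delta+\gamma)/q'-2\sigma\gamma}.$$

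The hardest and most delicate step is the final comparison. Combining the two upper bounds, the estimate on $\|\tilde F_R\|_{L^{p'}\dot{H}^{2\sigma}_{q'}}$ features the maximum of the four scales $R^{-\gamma}$, $R^{\beta-1-\delta/2}$, $R^{\delta/2-2}$ and $R^{1-\delta}$; one must then check that the extra scale $R^{1-\delta}$ produced by the $R_1$-perturbation is always dominated by one of the three already present in the $F_R$-estimate, so that the minimum defining the exponent $\mu$ in \eqref{eq:epo72} is not worsened. This is precisely where the hypothesis $\delta>1$ enters in a crucial way, and it is settled by a routine case distinction over the admissible ranges $\delta\in(1,2)$, $\gamma\in(1/2,1)$, $\beta>0$ (for instance, the sub-case $\delta>3/2$ with $\gamma<\delta-1$ is immediate, while the sub-case $\delta\leq 3/2$ is handled by requiring $\beta\geq 2-\delta/2$). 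Together with the first two steps, this yields the claimed lower bound $R^\mu$ on the ratio, completing the proof.
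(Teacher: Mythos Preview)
Your overall decomposition matches the paper's: apply the triangle inequality to $\tilde F_R$, cite \eqref{eq:epo26} for the $F_R$ piece, and estimate the Taylor remainder separately. The gap is in your bound on the remainder. The pointwise estimate $\bigl|z^{1-\delta}R_1(y/z)\bigr|\leq CR^{1-\delta}$ is correct on $\supp W_R$, but it is too crude: the resulting extra scale $R^{1-\delta}$ is \emph{not} always dominated by one of $R^{-\gamma}$, $R^{\beta-1-\delta/2}$, $R^{\delta/2-2}$. For instance, with $\delta=5/4$, $\gamma=3/4$, $\beta=1/2$ one has $1-\delta=-1/4$ while the maximum of the other three exponents is $-\gamma=-3/4$, so your extra term strictly worsens the minimum in \eqref{eq:epo72}. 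This is not an artificial choice of parameters: it lies exactly in the regime $\gamma\in(\delta/2,1)$, $\beta\approx\delta-\gamma$ that is used immediately after the lemma to make $\mu>0$. Your proposed fix of ``requiring $\beta\geq 2-\delta/2$'' (or $\gamma<\delta-1$ when $\delta>3/2$) is therefore a genuine restriction, not a routine case split, and it is incompatible with how the lemma is applied.

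The paper avoids this by not taking the naive pointwise bound. Instead it writes
\[
z^{1-\delta}\,\Bigl|R_1\Bigl(\frac{y}{z}\Bigr)\Bigr|\,\omega(y,z)\;\leq\; C\,\frac{|y|^2}{z^{1+\delta}}\,\omega(y,z)\;=\;\frac{C}{z}\,\Theta(y,z),
\]
where $\Theta(y,z)=\bigl(|y|/z^{\delta/2}\bigr)^{2} v\bigl(y/z^{\delta/2}\bigr)$ is again of the form \eqref{eq:epo28} with $L(y')=|y'|^{2}v(y')$; the exponential decay of the eigenfunction $v$ guarantees $L\in\bigcap_p L^p$ together with its derivatives. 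The same machinery from Lemma~\ref{lem:lemapre} then yields $\|\Theta\psi_R\psi\chi\|_{\dot H^{2\sigma}_{q'}}\leq CR^{(\delta+\gamma)/q'-2\sigma\gamma}$, so the remainder contributes a factor $R^{-1}$ rather than $R^{1-\delta}$. Since $\gamma<1$, one always has $R^{-1}\leq R^{-\gamma}$, and the formula for $\mu$ in \eqref{eq:epo72} is unaffected without any case distinction. In short, the missing idea is to absorb the factor $|y|^{2}/z^{\delta}=(|y|/z^{\delta/2})^{2}$ into the rescaled eigenfunction rather than to bound it pointwise on the support of the cutoffs.
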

\begin{proof}
Thanks to \eqref{eq:epo26}, we just need to estimate the rest term in \eqref{eq:epo70}. We easily obtain
\begin{align*}
&\|z^{1-\delta}R_1\left(\frac{y}{z}\right)W_R\|_{L^{p'}((0,T);\dot{H}^{2\sigma}_{q'})}\\
&\ \ \ 
\leq CT^{1/p'}\|\frac{|y|^2}{z^{1+\delta}}\omega\psi_R\psi\chi\|_{\dot{H}^{2\sigma}_{q'}}\leq CT^{1/p'}R^{-1}\|\Theta\psi_R\psi\chi\|_{\dot{H}^{2\sigma}_{q'}}\\
&\ \ \
\leq CT^{1/p'}R^{(\delta+\gamma)/q'-2\sigma\gamma-1},
\end{align*}
for any $R>1$, and $1/2<\gamma<1$, where $\omega$ is the rescaled eigenfunction in \eqref{eq:omega1}. Hence for $t\in(0,R^{\beta})$ we have
\begin{equation}\label{eq:error}
\|z^{1-\delta}R_1\left(\frac{y}{z}\right)W_R\|_{L^{p'}((0,R^{\beta});\dot{H}^{2\sigma}_{q'})}\leq CR^{\beta/p'+(\delta+\gamma)/q'-2\sigma\gamma-1}
\end{equation}
for any $\beta>0$.

By \eqref{eq:epo70}, we have 
\begin{equation}\label{eq:quasi}
  \frac{\|W_{R}\|_{L^{p}((0,R^{\beta});L^{q}_{x})}}{\|\tilde{F}_{R}\|_{L^{p'}
  ((0,R^{\beta});\dot{H}^{2\sigma}_{q'})}}
  \geq
  \frac{\|W_{R}\|_{L^{p}((0,R^{\beta});L^{q}_{x})}}{\|F_{R}\|_{L^{p'}
  ((0,R^{\beta});\dot{H}^{2\sigma}_{q'})}+\|z^{1-\delta}R_1\left(\frac{y}{z}\right)W_R\|_{L^{p'}((0,R^{\beta});\dot{H}^{2\sigma}_{q'})}}.
\end{equation}
In conclusion, \eqref{eq:epo71} follows from \eqref{eq:epo22}, \eqref{eq:epo23}, \eqref{eq:error} and \eqref{eq:quasi} whenever the couple $(p,q)$ satisfies the admissibility condition \eqref{eq:admis}.
\end{proof}

Let us now go back to the inhomogeneous Cauchy problem \eqref{eq:epo69}-\eqref{eq:epo69datum}. Notice that
\begin{equation}\label{eq:epo74}
 \|u_{R}\|_{L^{p}_{t}L^{q}_{x}} \geq \|u_{R}\|_{L^{p}((0, R^{\beta}); L^{q}_{x})} =  \|W_{R}\|_{L^{p}((0, R^{\beta}); L^{q}_{x})},
\end{equation}
from which it follows
\begin{equation}\label{eq:epo75}
 \frac{\|u_{R}\|_{L^{p}_{t}L^{q}_{x}}}{\|f_{R}\|_{\dot{H}^{\sigma}_x} + \|\tilde{F}_{R}\|_{L^{p'}_{t}\dot{H}^{2\sigma}_{q'}}} \geq \frac{ \|W_{R}\|_{L^{p}((0, R^{\beta}); L^{q}_{x})}}{\|f_{R}\|_{\dot{H}^{\sigma}_x} + \|\tilde{F}_{R}\|_{L^{p'}((0, R^{\beta}); \dot{H}^{2\sigma}_{q'})}}
\end{equation} 
Observe that \eqref{eq:epo25} implies that for any $1\leq p<\infty$, $1/2<\gamma<1$, 
\begin{equation}\label{eq:epo76}
\frac{\|W_{R}\|_{L^{p}((0, R^{\beta}); L^{q}_{x})}}{\|f_{R}\|_{\dot{H}^{\sigma}_x}} \to +\infty,
\end{equation}
as $R\to+\infty$, provided that $\beta>(\delta-\gamma)$. On the other hand, the function $\mu(\delta,\gamma,\beta,p)$ defined in \eqref{eq:epo72}, is continuous with respect to $\delta$, and in particular
\begin{equation}
\mu(\delta,\gamma,\delta-\gamma,p)=\min\{2\gamma-\delta, 1-\frac{3\delta}{2}+2\gamma, 2-\frac{3\delta}{2}+\gamma\}.
\end{equation}
Hence $\mu$ is strictly positive if 
\begin{equation}\label{eq:range}
\frac{\delta}{2}<\gamma<1.
\end{equation}
Since $1<\delta<2$, the range given by \eqref{eq:range} contains some $\gamma\in(1/2,1)$; hence, by choosing $\beta>\delta-\gamma$, we obtain $\mu>0$. This remark, together with \eqref{eq:epo71}, gives
\begin{equation}\label{eq:epo78}
\frac{\|W_{R}\|_{L^{p}((0,R^{\beta});L^{q}_{x})}}{\|\tilde{F}_{R}\|_{L^{p'}((0,R^{\beta});\dot{H}^{2\sigma}_{q'})}} \to +\infty,
\end{equation}
as $R\to+\infty$. Therefore, by \eqref{eq:epo75}, \eqref{eq:epo76} and \eqref{eq:epo78} we conclude that 
\begin{equation}\label{eq:epo79}
\frac{\|u_{R}\|_{L^{p}_{t}L^{q}_{x}}}{\|f_{R}\|_{\dot{H}^{\sigma}_x} + \|\tilde{F}_{R}\|_{L^{p'}_{t}\dot{H}^{2\sigma}_{q'}}} \to +\infty,
\end{equation}
as $R\to+\infty$. The last inequality shows that the following Strichartz estimates 
\begin{equation}\label{eq:epo80}
\|u\|_{L^{p}_{t}L^{q}_{x}} \leqslant C\left(\|f\|_{\dot{H}^{\sigma}_x} +  \|F\|_{L^{p'}_{t}\dot{H}^{2\sigma}_{q'}}\right)
\end{equation}
cannot be satisfied by solutions of the inhomogeneous Dirac equation
\begin{equation}\label{eq:epo81}
\begin{cases}
i\partial_{t}u-i\alpha\cdot\nabla_A u=F
\\
u(x,0)=f(x),
\end{cases}
\end{equation}
where the potential $A$ is given by \eqref{eq:A}. In order to disprove Strichartz estimates for the corresponding homogeneous Dirac equation, near the point $p=\infty$, $q=2$, it is sufficient to apply a standard $TT^{*}$-argument, via Christ-Kiselev Lemma (see \cite{CK}). The rest of the estimates fail by interpolation with the mass conservation (i.e. the $L^\infty L^2$-estimate). This completes the proof of Theorem \ref{thm:counterdirac}.

\end{document}